\newtheorem{theorem}{Theorem}
\newtheorem{conjecture}{Conjecture}
\newtheorem{lemma}{Lemma}
\newtheorem{question}{Question}
\newtheorem{remark}{Remark}
\newcommand{\seqnum}[1]{\href{https://oeis.org/#1}{\rm \underline{#1}}}
\begin{document}

\baselineskip=17pt

\title{\bf Generalizations of amicable numbers}

\author{\bf S. I. Dimitrov}

\date{}

\maketitle
\begin{abstract}
In this paper, we propose new generalizations of amicable numbers. We also give examples and prove properties of these new concepts.\\
\quad\\
\textbf{Keywords}:  Sum of divisors, Amicable numbers.\\
\quad\\
{\bf  2020 Math.\ Subject Classification}: 11A25 $\cdot$ 11D72 
\end{abstract}

\section{Notations}
\indent

One of the most remarkable sums in number theory is the aliquot sum $s(n)$ of a positive integer $n$. 
It is the sum of all proper divisors of $n$, that is, all divisors of $n$ other than $n$ itself. Thus 
\begin{equation*}
s(n)=\sum _{{d|n,} \atop {d<n}}d\,.
\end{equation*}
As usual $\sigma(n)$ is the sum of all the divisors of $n$. Thus $\sigma(n)=s(n)+n$. 
We let $\zeta(s)$ denote the Riemann zeta function. Recall that
\begin{equation*}
\zeta(s)=\sum\limits_{n=1}^\infty\frac{1}{n^s}
\end{equation*}
for $\textmd{Re}(s)>1$. The abbreviations $\textmd{gcd}$ and $\textmd{lcm}$ stand for the greatest common divisor and the least common multiple.

\section{Introduction}
\indent

Two natural numbers $m$ and $n$ are said to be amicable if
\begin{equation*}
\sigma(m)=\sigma(n)=m+n\,.
\end{equation*}
If $m=n$, then $m$ is perfect. It is well known that the smallest perfect number is 6.
The smallest pair of amicable numbers, (220, 284), was known to the Pythagoreans.
Although more than 2000 years have passed, it is still unknown whether infinitely many amicable pairs exist.
In 1913 Dickson \cite{Dickson} defined the natural numbers $n_1,n_2, n_3$ to form an amicable triple if
\begin{equation*}
\sigma(n_1)=\sigma(n_2)=\sigma(n_3)=n_1+n_2++n_3
\end{equation*}
or equivalently 
\begin{equation*}
\left|\begin{array}{ccc}
s(n_1)=n_2+n_3\\
s(n_2)=n_1+n_3\\
s(n_3)=n_1+n_2
\end{array}\right.\,.
\end{equation*}
The smallest amicable triple, in the sense defined by Dickson, is (1980, 2016, 2556). The same generalization to the $k$-tuples was also proposed by Dickson. 
Afterwards, Yanney \cite{Yanney}, using the same symbolism and terminology, defined the numbers $n_1,n_2, n_3$ to form an amicable triple if
\begin{equation*}
2\sigma(n_1)=2\sigma(n_2)=2\sigma(n_3)=n_1+n_2++n_3
\end{equation*}
or equivalently 
\begin{equation*}
\left|\begin{array}{ccc}
n_1=s(n_2)+s(n_3)\\
n_2=s(n_1)+s(n_3)\\
n_3=s(n_1)+s(n_2)
\end{array}\right.\,.
\end{equation*}
The smallest amicable triple according to Yanney's definition is (238, 255, 371). The same generalization to the $k$-tuples was also proposed by Yanney. 
Carmichael \cite{Carmichael} defined a multiply amicable pair as two numbers $m$ and $n$ such that
\begin{equation*}
\sigma(m)=\sigma(n)=t(m+n)\,,
\end{equation*}
where $t$ is a positive integer. The same generalization to the $k$-tuples was proposed by Mason \cite{Mason}. 
In 1995 Cohen, Gretton and Hagis \cite{Cohen} defined natural numbers $m$ and $n$ to be multiamicable if the sum of the
proper divisors of each is a multiple of the other. More precisely, the pair $(m, n)$ is said to be $(\alpha, \beta)$-amicable if
\begin{equation*}
\sigma(m)-m=\alpha n \quad \mbox{ and }   \quad \sigma(n)-n=\beta m\,.
\end{equation*}
Another interesting generalization is due to Bishop, Bozarth, Kuss and Peet \cite{Bozarth}. 
They defined a feebly amicable $k$-tuple as numbers $n_1,\ldots, n_k$ such that
\begin{equation*}
\frac{n_1}{\sigma(n_1)}+\cdots+\frac{n_k}{\sigma(n_k)}=1\,.
\end{equation*}
Apparently, every amicable $k$-tuple satisfies the equation above. For $k=2$, the smallest feebly amicable pair is (4, 12).
In Section \ref{WHM}, we will generalize the definition given in \cite{Bozarth}.
We say that an integer $n$ is an amicable number if it belongs to an amicable pair, or equivalently 
\begin{equation*}
\sigma(\sigma(n)-n)=\sigma(n)\,.
\end{equation*}
Let $A(x)$ denote the number of amicable numbers up to $x$.
In 1955, Erdős \cite{Erdos1} proved that the asymptotic density of amicable integers relative to the positive integers is 0. 
That is, the ratio of the number of amicable integers less than $x$ to $x$ tends to zero as $x$ tends to infinity.
Subsequently, several papers \cite{Erdos2, Pomerance1, Pomerance2, Rieger} established progressively better upper bounds for $A(x)$ 
and the best result to date is due to Pomerance \cite{Pomerance3} with
\begin{equation*}
A(x)\leq\frac{x}{e^{\sqrt{\log x}}}\,.
\end{equation*}
A lot of articles are devoted to generalizations of amicable numbers.  
We point out the papers \cite{Beck, Garcia, Hagis1, Hagis2, Lal, Pollack}, but many other similar results can be found in the literature.
Motivated by the aforementioned investigations, in this paper we propose an extension of the notion of amicable numbers.

\section{Lemmas}
\indent

\begin{lemma}\label{Erdos}
For a fixed integer $N>0$, the set of integers $l$ for which $N$ does not divide $\sigma(l)$ has density $0$.
\end{lemma}
\begin{proof}
See (\cite{Erdos1}, Lemma 2).
\end{proof}

\begin{lemma}\label{sigma1}
For each number $x\geq 1$, we have
\begin{equation*}
\sum\limits_{n\leq x}\frac{\sigma(n)}{n}<\zeta(2)x\,.
\end{equation*}
\begin{proof}
Using the well-known identity
\begin{equation}\label{identity}
\frac{\sigma(n)}{n}=\sum_{u|n}\frac{1}{u}
\end{equation}
we write
\begin{align*}
&\sum\limits_{n\leq x}\frac{\sigma(n)}{n}=\sum\limits_{n\leq x}\sum_{u|n}\frac{1}{u}=\sum\limits_{u\leq x}\frac{1}{u}\sum_{n\leq x\atop{u|n}}1
=\sum\limits_{u\leq x}\frac{1}{u}\bigg[\frac{x}{u}\bigg]\leq x\sum\limits_{u\leq x}\frac{1}{u^2}<\zeta(2)x\,.\\
\end{align*}
\end{proof}
\end{lemma}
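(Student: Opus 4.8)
The plan is to reduce the sum of $\sigma(n)/n$ to a sum of $1/u^2$ by interchanging the order of summation, and then to compare the resulting truncated series with $\zeta(2)$. First I would record the divisor identity $\sigma(n)/n=\sum_{u\mid n}1/u$. This follows at once from $\sigma(n)=\sum_{d\mid n}d$: dividing by $n$ and replacing each divisor $d$ by its complementary divisor $u=n/d$ gives $d/n=1/u$, and as $d$ ranges over the divisors of $n$ so does $u$. This is the crucial starting observation, since it turns the multiplicative quantity $\sigma(n)/n$ into a sum that is linear in the divisors.

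Next I would substitute the identity into $\sum_{n\le x}\sigma(n)/n$ and swap the two summations, grouping, for each fixed $u$, the contribution of all $n\le x$ divisible by $u$. The inner count is precisely $\lfloor x/u\rfloor$, the number of multiples of $u$ not exceeding $x$, so the double sum equals $\sum_{u\le x}(1/u)\lfloor x/u\rfloor$. Applying the elementary bound $\lfloor x/u\rfloor\le x/u$ then extracts a factor of $x$ and leaves $x\sum_{u\le x}1/u^2$.

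Finally I would compare the truncated sum $\sum_{u\le x}1/u^2$ with the full series $\sum_{u=1}^{\infty}1/u^2=\zeta(2)$. For any finite $x\ge 1$ the truncation omits infinitely many strictly positive terms, so $\sum_{u\le x}1/u^2<\zeta(2)$, and chaining this with the previous estimate yields the desired strict inequality. The only point deserving a moment's attention is where the strictness comes from: it is supplied entirely by discarding the positive tail of the zeta series, so the floor estimate need not itself be strict. The argument is an elementary Fubini-type interchange, and I anticipate no real obstacle beyond keeping the summation ranges correct throughout.
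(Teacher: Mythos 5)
Your proposal is correct and follows exactly the paper's own argument: the divisor identity $\sigma(n)/n=\sum_{u\mid n}1/u$, the interchange of summation giving $\sum_{u\le x}(1/u)\lfloor x/u\rfloor$, the bound $\lfloor x/u\rfloor\le x/u$, and the strict comparison of the truncated sum with $\zeta(2)$. Your remark that the strictness comes from discarding the positive tail of the zeta series, not from the floor estimate, is a correct and slightly more careful accounting than the paper bothers to give.
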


\begin{lemma}\label{sigmak}
Let $k\geq2$ be integer. For each number $x\geq 1$, we have
\begin{equation*}
\sum\limits_{n\leq x}\frac{\sigma^k(n)}{n^k}<\zeta^k(2)\zeta(2k-1)x\,.
\end{equation*}
\begin{proof}
Using \eqref{identity} and the well-known identity
\begin{equation*}
\textmd{lcm}(n_1,\ldots, n_k)\,\textmd{gcd}\bigg(\frac{m}{n_1},\ldots, \frac{m}{n_k}\bigg)=m\,, 
\end{equation*}
where
\begin{equation*}
m=n_1\cdots n_k
\end{equation*}
we deduce
\begin{align*}
\sum\limits_{n\leq x}\frac{\sigma^k(n)}{n^k}&=\sum\limits_{n\leq x}\sum_{n_1|n}\cdots\sum_{n_k|n}\frac{1}{n_1\cdots n_k}
=\sum_{n_1\leq x}\cdots\sum_{n_k\leq x}\frac{1}{n_1\cdots n_k}\sum_{n\leq x\atop{\textmd{lcm}(n_1, \ldots, n_k})|n}1  \\
&=\sum_{n_1\leq x}\cdots\sum_{n_k\leq x}\frac{1}{n_1\cdots n_k}\bigg[\frac{x}{\textmd{lcm}(n_1, \ldots, n_k)}\bigg]\\
&\leq x\sum_{n_1\leq x}\cdots\sum_{n_k\leq x}\frac{1}{n_1\cdots n_k\,\textmd{lcm}(n_1, \ldots, n_k)}\\
&=x\sum_{n_1\leq x}\cdots\sum_{n_k\leq x}\frac{\textmd{gcd}\left(\frac{m}{n_1},\ldots, \frac{m}{n_k}\right)}{n^2_1\cdots n^2_k}
<x\sum\limits_{n_1=1}^\infty\cdots\sum\limits_{n_k=1}^\infty\frac{\textmd{gcd}\left(\frac{m}{n_1},\ldots, \frac{m}{n_k}\right)}{n^2_1\cdots n^2_k}\\
&=x\sum\limits_{d=1}^\infty\sum\limits_{\textmd{gcd}\left(\frac{m}{n_1},\ldots, \frac{m}{n_k}\right)=d}\frac{d}{n^2_1\cdots n^2_k}
<x\sum\limits_{d=1}^\infty\sum\limits_{n_1=1}^\infty\cdots\sum\limits_{n_k=1}^\infty\frac{d}{(dn'_1)^2\cdots(dn'_k)^2}\\
&=x\sum\limits_{d=1}^\infty\frac{1}{d^{2k-1}}\sum\limits_{n_1=1}^\infty\cdots\sum\limits_{n_k=1}^\infty\frac{1}{(n'_1)^2\cdots(n'_k)^2}=\zeta^k(2)\zeta(2k-1)x\,.
\end{align*}
\end{proof}
\end{lemma}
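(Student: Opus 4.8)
The plan is to follow the proof of Lemma~\ref{sigma1} verbatim, only raising the divisor sum to the $k$-th power. Using \eqref{identity} I would write $\sigma^k(n)/n^k=\big(\sum_{u\mid n}1/u\big)^k=\sum_{n_1\mid n}\cdots\sum_{n_k\mid n}1/(n_1\cdots n_k)$, interchange the summations, and observe that the condition $n_1\mid n,\ldots,n_k\mid n$ is equivalent to $\mathrm{lcm}(n_1,\ldots,n_k)\mid n$, so that the inner sum equals $\lfloor x/\mathrm{lcm}(n_1,\ldots,n_k)\rfloor$. Dropping the floor via $\lfloor x/L\rfloor\le x/L$ and extending the ranges to infinity reduces everything to the clean inequality
\[
S:=\sum_{n_1,\ldots,n_k\ge 1}\frac{1}{n_1\cdots n_k\,\mathrm{lcm}(n_1,\ldots,n_k)}\le\zeta^k(2)\zeta(2k-1).
\]

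The natural second step is to insert the stated $\mathrm{lcm}$--$\mathrm{gcd}$ identity with $m=n_1\cdots n_k$, rewriting the summand as $\gcd(m/n_1,\ldots,m/n_k)/(n_1^2\cdots n_k^2)$, and then to sort the tuples by the common value $d=\gcd(m/n_1,\ldots,m/n_k)$. If in each class one could substitute $n_i=d\,n_i'$, the factor $d$ would combine with $(n_1\cdots n_k)^{-2}=d^{-2k}(n_1'\cdots n_k')^{-2}$ to give $d^{1-2k}$ times a subseries of $\zeta^k(2)$; summing $d^{1-2k}$ over $d$ would then manufacture exactly the factor $\zeta(2k-1)$ and finish the proof.

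The main obstacle is that this substitution is only legitimate when $d$ divides every $n_i$. For $k=2$ this is automatic, since $\gcd(m/n_1,m/n_2)=\gcd(n_1,n_2)$ divides both arguments, and the estimate then closes comfortably. For $k\ge 3$ it fails: already $n_1=n_2=n_3=2$ gives $m=8$, $m/n_i=4$ for each $i$, and $d=\gcd(4,4,4)=4$, which does not divide $n_i=2$. Thus for $k\ge 3$ one cannot simply factor out $d$, and a correct argument must instead exploit the full multiplicativity of $S$, writing $S=\prod_p L_p$ with $L_p=\sum_{a_1,\ldots,a_k\ge 0}p^{-(a_1+\cdots+a_k)-\max_i a_i}$ and comparing $L_p$ prime-by-prime with $(1-p^{-2})^{-k}(1-p^{-(2k-1)})^{-1}$. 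I expect this comparison to be the genuinely hard and delicate point: the two local factors agree through the $p^{-2}$ term but differ already at order $p^{-3}$ by $\binom{k}{2}p^{-3}$, so for $k\ge 3$ one must check very carefully whether the proposed constant $\zeta^k(2)\zeta(2k-1)$ can in fact absorb this discrepancy.
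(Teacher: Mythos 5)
Your diagnosis is exactly right, and it identifies a genuine flaw in the paper's own argument: the paper performs precisely the substitution you call illegitimate. After sorting tuples by $d=\gcd(m/n_1,\ldots,m/n_k)$, the paper's proof dominates the sum over the class $\{\gcd(m/n_1,\ldots,m/n_k)=d\}$ by the sum over all tuples of the form $(dn_1',\ldots,dn_k')$, which presupposes $d\mid n_i$ for every $i$. As you note, for $k=2$ this is automatic (there $\gcd(m/n_1,m/n_2)=\gcd(n_2,n_1)$, and dropping the coprimality of $(n_1',n_2')$ only enlarges the sum), so the lemma is true and correctly proved for $k=2$. For $k\ge3$ your counterexample $(2,2,\ldots,2)$, where $d=2^{k-1}\nmid 2$, shows the containment fails, and the paper's inequality between those two displayed sums is unjustified.

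Moreover, your closing doubt is resolved in the negative: the constant cannot absorb the discrepancy, and the lemma as stated is \emph{false} for every $k\ge3$, so neither your plan nor any other can complete the proof. Indeed, restricting the (correct) opening identity to $n_1,\ldots,n_k\le y$ and letting $x\to\infty$, then $y\to\infty$, gives
\begin{equation*}
\liminf_{x\to\infty}\frac1x\sum_{n\le x}\frac{\sigma^k(n)}{n^k}\ \ge\ S_k:=\sum_{n_1,\ldots,n_k\ge1}\frac{1}{n_1\cdots n_k\,\mathrm{lcm}(n_1,\ldots,n_k)}\ =\ \prod_pL_p\,,
\end{equation*}
with $L_p$ as in your proposal. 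Since $\max_ia_i\le a_1+\cdots+a_k$ always, each term of $L_p$ is at least the corresponding term of $(1-p^{-2})^{-k}=\sum_{a_1,\ldots,a_k\ge0}p^{-2(a_1+\cdots+a_k)}$, and the $\binom{k}{2}$ tuples with exactly two entries equal to $1$ contribute $p^{-3}$ instead of $p^{-4}$; hence $L_p\ge(1-p^{-2})^{-k}+\binom{k}{2}\bigl(p^{-3}-p^{-4}\bigr)$. Using this at $p=2$ and the cruder bound $L_p\ge(1-p^{-2})^{-k}$ at all other primes,
\begin{equation*}
S_k\ \ge\ \zeta^k(2)\Bigl(1+\binom{k}{2}\tfrac{1}{16}\bigl(\tfrac34\bigr)^k\Bigr)\ >\ \zeta^k(2)\bigl(1+3\cdot4^{-k}\bigr)\ \ge\ \zeta^k(2)\zeta(2k-1)\qquad(k\ge3)\,,
\end{equation*}
where the middle inequality amounts to $\binom{k}{2}3^k>48$ and the last one to the elementary estimate $\zeta(2k-1)\le1+\tfrac32\cdot2^{-(2k-1)}$. (Numerically for $k=3$: $L_2=\tfrac{304}{105}\approx2.90$, $L_3\approx1.54$, $L_5\approx1.15$, so $S_3>5.1$, while $\zeta^3(2)\zeta(5)\approx4.62$.) Thus for $k\ge3$ the asserted inequality fails for all sufficiently large $x$, and since the left-hand side of the paper's flagged step sums to $S_k$ while its right-hand side is exactly $\zeta^k(2)\zeta(2k-1)$, that step is not merely unjustified but false.

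The damage to the rest of the paper is limited: Theorem \ref{Mox} only needs $\sum_{m\le x}\sigma^3(m)/m^3=O(x)$, and a correct substitute for the lemma follows from your opening reduction together with $\mathrm{lcm}(n_1,\ldots,n_k)\ge\max_in_i\ge(n_1\cdots n_k)^{1/k}$, which yields $\sum_{n\le x}\sigma^k(n)/n^k<\zeta^k(1+1/k)\,x$; alternatively one may quote the sharp constant $S_k=\prod_pL_p$ furnished by your Euler-product route (for $k=2$ this refines the stated bound to $\zeta^2(2)\zeta(3)/\zeta(4)$).
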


\section{{\boldmath$(\alpha_1,\ldots,\alpha_k)$}-multiamicable {\boldmath$k$}-tuples}  
\indent

We say that the natural numbers $n_1,\ldots, n_k$ form an $(\alpha_1,\ldots,\alpha_k)$-multiamicable $k$-tuple if
\begin{equation}\label{sigma1k}
\sigma(n_1)=\sigma(n_2)=\cdots=\sigma(n_k)=\alpha_1n_1+\alpha_2n_2+\cdots+\alpha_kn_k\,.
\end{equation}
Of course, $\alpha_1,\ldots,\alpha_k$ must be positive integers. 
If $k=1$ and $\alpha_1=2$ then $n_1$ is a perfect number. 
When $\alpha_1=\alpha_2=\cdots=\alpha_k$ then $n_1,\ldots, n_k$ is a multiply amicable $k$-tuple according to Mason's definition. 
There is no extra interest in allowing the possibility that $n_1=n_2=\cdots =n_k$, so we shall assume always that $n_1<n_2<\cdots<n_k$
with the $k$-tuple $(\alpha_1,\ldots,\alpha_k)$ ordered accordingly. Hence
\begin{equation}\label{abundant}
(\alpha_1+\cdots+\alpha_k)n_1<\sigma(n_j)<(\alpha_1+\cdots+\alpha_k)n_k
\end{equation}
for each $j\in[1, k]$, which means that $n_1$ is $(\alpha_1+\cdots+\alpha_k)$-abundant. 
Following the method in \cite{Dickson} and \cite{Mason} we shell use the next proposition to find $(\alpha_1,\ldots,\alpha_k)$-multiamicable $k$-tuples.
\begin{theorem}
Suppose the natural numbers $N_1,\ldots, N_k$,  $\alpha_1,\ldots,\alpha_k$ and $a$ satisfy $(a, N_1)=\cdots=(a, N_k)= 1$ and
\begin{equation*}
\frac{\sigma(a)}{a}=\frac{\alpha_1N_1+\cdots+\alpha_kN_k}{\sigma(N_1)}=\cdots=\frac{\alpha_1N_1+\cdots+\alpha_kN_k}{\sigma(N_k)}\,.
\end{equation*}
Then $aN_1,\ldots, aN_k$ are an $(\alpha_1,\ldots,\alpha_k)$-multiamicable $k$-tuple.
\end{theorem}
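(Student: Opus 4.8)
The plan is to exploit the multiplicativity of $\sigma$ together with the coprimality hypotheses, which collapses the whole statement onto a single algebraic identity. First I would abbreviate $S=\alpha_1N_1+\cdots+\alpha_kN_k$, so that the chain of equalities in the hypothesis reads $\sigma(a)/a=S/\sigma(N_j)$ for every $j\in[1,k]$. Clearing denominators, each of these is equivalent to the identity
\begin{equation*}
\sigma(a)\,\sigma(N_j)=a\,S\,,
\end{equation*}
and I would treat this as the workhorse of the argument. As an immediate byproduct, since the numerator $S$ is the same throughout, the equalities $S/\sigma(N_1)=\cdots=S/\sigma(N_k)$ force $\sigma(N_1)=\cdots=\sigma(N_k)$.

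Next I would invoke the assumption $(a,N_j)=1$ for each $j$. Because $\sigma$ is multiplicative, this yields $\sigma(aN_j)=\sigma(a)\,\sigma(N_j)$, and combining with the workhorse identity gives $\sigma(aN_j)=aS$ for every $j$. In particular the quantities $\sigma(aN_1),\ldots,\sigma(aN_k)$ are all equal to the common value $aS$, which settles the first string of equalities demanded by definition \eqref{sigma1k}. It then remains only to match this common value against $\alpha_1(aN_1)+\cdots+\alpha_k(aN_k)$; but factoring $a$ out gives $\alpha_1(aN_1)+\cdots+\alpha_k(aN_k)=a(\alpha_1N_1+\cdots+\alpha_kN_k)=aS$, so the required second equality holds automatically and $aN_1,\ldots,aN_k$ satisfy \eqref{sigma1k}.

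Honestly there is no serious obstacle here: the statement is engineered so that the coprimality condition activates multiplicativity of $\sigma$, while the common-ratio hypothesis is precisely the rearranged form of the product identity $\sigma(a)\sigma(N_j)=aS$. The only point requiring a moment's care is keeping the bookkeeping straight between the single scaling factor $a$ and the shared target sum $S$, and noting that the hypothesis is an assumption to be used rather than a constraint to be verified. If one also wished to confirm that the resulting tuple is ordered as $aN_1<\cdots<aN_k$ (as stipulated in the discussion around \eqref{abundant}), that follows at once from $N_1<\cdots<N_k$ upon multiplying by the positive integer $a$.
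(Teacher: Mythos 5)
Your proof is correct and follows exactly the route the paper intends: the paper's own proof is the single line ``this follows directly from the multiplicativity of $\sigma$,'' and your argument is just that observation carried out in full, clearing denominators to get $\sigma(a)\sigma(N_j)=aS$ and then using $(a,N_j)=1$ to conclude $\sigma(aN_j)=aS=\alpha_1(aN_1)+\cdots+\alpha_k(aN_k)$ for every $j$. No discrepancy; you have simply supplied the details the paper leaves implicit.
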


\begin{proof}
This follows  directly from the multiplicativity of $\sigma$.
\end{proof}
For $k=2$ several of these pairs $(aN_1, aN_2)$  that are $(\alpha_1, \alpha_2)$-multiamicable pairs are listed in the table below.
\begin{center}
\begin{tabular}[t]{|p{1em}|p{1em}|p{3.6em}|p{3.8em}|p{3.2em}|p{3.2em}|p{16em}|}
\hline 
$\alpha_1$ & $\alpha_2$ & $N_1$ & $N_2$ & $\sigma(a)/a$ & $a$ &  \hspace{22mm} $(aN_1, aN_2)$  \\
\hline
1 & 2 & $2^313$ & $2^2 29$ & 8/5 & $3\cdot5$ & \hspace{20mm} (1560, 1740) \\
\hline
1 & 2 & $2^2 3^25\cdot41$ & $2^5 3^5$ & 1 & 1 & \hspace{20mm}  (7380, 7776) \\
\hline
1 & 2 & $2^3 41$ & $2^2 89$ & 104/63 & $3^2 7$ & \hspace{18mm}  (20664, 22428) \\
\hline
1 & 2 &  $17\cdot37$ & 683 & 35/12 & $2^5 3^3$  & \hspace{16mm} (543456, 590112) \\
\hline
1 & 2 & $17\cdot37$ & 683 & 35/12 & $2^3 3^2 13$ & \hspace{16mm} (588744, 639288) \\
\hline
1 & 2 & $7^211\cdot17$ & $107\cdot113$ & 65/24 & $2^3 3^2$ & \hspace{16mm} (659736, 870552) \\
\hline
1 & 2 & $13\cdot89$ & $1259$ & 35/12 & $2^5 3^3$ & \hspace{16mm} (999648, 1087776) \\
\hline
2 & 1 & $2^2 5\cdot107$ & $2^5 71$ & 13/9 & $3^2$  & \hspace{18mm} (19260, 20448) \\
\hline
2 & 1 & $2^3 3^5 11$ &  $2^5 3^2 79$ & 1 & 1 & \hspace{18mm} (21384, 22752) \\
\hline
2 & 1 & $2^2 3^3 29$  & $2^3 3\cdot139$  & 8/7 & 7 & \hspace{18mm} (21924, 23352)  \\
\hline
2 & 1 & $17\cdot37\cdot59$ & $179\cdot227$ & 403/144 &  $2^4 3^2$  & \hspace{14mm} (5343984, 5851152) \\
\hline
1 & 3 & $3^3 5^3$ & $3^2 5\cdot79$ & 9/4 & $2^5 7$ & \hspace{16mm} (756000, 796320) \\
\hline
3 & 1 & $11\cdot29$ & $17\cdot19$ & 32/9 & $2^2 3^3 5\cdot7$ & \hspace{14mm} (1205820, 1220940) \\
\hline
3 & 1 & $7\cdot13^2$ & $11^3$ & 10/3 & $2^3 3^3 5$ & \hspace{14mm} (1277640, 1437480) \\
\hline
3 & 1 & $3^2 19\cdot41$ & $3^5 29$ & 18/7 &  $2^3 5\cdot7$ & \hspace{14mm} (1963080, 1973160) \\
\hline
\end{tabular}
\captionof{table}{}\label{Table1}
\end{center}
The sequence \seqnum{A383239} in the OEIS \cite{Sloane} consists of the larger components of $(1, 2)$-multiamicable pairs.
The next result concerns the density of $(\alpha, \beta)$-multiamicable pairs. Our argument is a modification of the one used by Cohen, Gretton, and Hagis in \cite{Cohen}.
\begin{theorem}\label{Mox}
Let $M(x)$ denote the number of $(\alpha, \beta)$-multiamicable pairs $m$, $n$ with $m<n$ and $m\leq x$. Then $M(x)=o(x)$ as $x\rightarrow \infty$.
\end{theorem}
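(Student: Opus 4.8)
The plan is to adapt the density argument of Erdös encoded in Lemma \ref{Erdös}, using Lemma \ref{sigma1} to keep the partner of an admissible number from being too large. Throughout I treat $\alpha,\beta$ as fixed. If $\alpha=\beta=1$ the defining relations become $\sigma(m)=\sigma(n)=m+n$, so the pair is amicable and the conclusion is already known; hence I may assume $\alpha\beta\neq 1$, i.e. $1-\alpha\beta\neq 0$. Since the smaller element determines its partner through $n=(\sigma(m)-m)/\alpha$, and conversely $m=(\sigma(n)-n)/\beta$, each pair is determined by either of its members, so it suffices to bound the number of admissible smaller elements $m\le x$.

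The engine is a single congruence. Fix a prime $p$ with $p\nmid(1-\alpha\beta)$; since $1-\alpha\beta$ is a fixed nonzero integer, all but finitely many primes qualify. If both $p\mid\sigma(m)$ and $p\mid\sigma(n)$, then reducing $\sigma(m)=m+\alpha n$ and $\sigma(n)=n+\beta m$ modulo $p$ gives first $n\equiv-\beta m$ and then $m(1-\alpha\beta)\equiv 0\pmod p$, whence $p\mid m$. Consequently every admissible $m\le x$ lies in one of three classes: (i) $p\nmid\sigma(m)$; (ii) $p\nmid\sigma(n)$ for its partner $n$; or (iii) $p\mid m$. Class (i) has size $o(x)$ by Lemma \ref{Erdös}, and class (iii) has size at most $x/p+1$.

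The main obstacle is class (ii): Lemma \ref{Erdös} bounds $\{\,l:p\nmid\sigma(l)\,\}$ in terms of the free variable $l$, but the relevant variable here is the partner $n=(\sigma(m)-m)/\alpha$, which a priori ranges as high as $\sigma(m)\asymp x\log\log x$, so a direct count of admissible $n$ would only give $o(x\log\log x)$ and lose the theorem. To remove the $\log\log x$ factor I would truncate using Lemma \ref{sigma1}: from $\sum_{m\le x}\sigma(m)/m<\zeta(2)x$ it follows that at most $\zeta(2)x/K$ integers $m\le x$ satisfy $\sigma(m)>Km$. Discarding these, every surviving admissible $m$ has partner $n=(\sigma(m)-m)/\alpha\le\sigma(m)\le Kx$, so $n$ lives in a range of length a bounded multiple of $x$. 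Because distinct pairs have distinct partners, the surviving members of class (ii) inject into $\{\,n\le Kx:p\nmid\sigma(n)\,\}$, whose size is $o(x)$ for fixed $K$ and $p$ by Lemma \ref{Erdös}.

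Assembling the contributions yields $M(x)\le \zeta(2)x/K+o(x)+x/p+1$, and therefore $\limsup_{x\to\infty}M(x)/x\le \zeta(2)/K+1/p$. Since $K$ and $p$ may be taken arbitrarily large, the right-hand side tends to $0$, which is exactly $M(x)=o(x)$. The quantitative inputs are thus Lemma \ref{Erdös}, applied once to $m$ and once to the partner $n$, together with Lemma \ref{sigma1}, applied once to truncate the range of $n$; the delicate point to verify is the uniformity, namely that one fixed prime $p$ drives the congruence while $K$ is sent to infinity independently of it.
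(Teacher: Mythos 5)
There is a genuine gap: you have proved a theorem about the wrong defining equations. In this paper an $(\alpha,\beta)$-multiamicable pair is the case $k=2$ of \eqref{sigma1k}, namely $\sigma(m)=\sigma(n)=\alpha m+\beta n$ --- a single common value of $\sigma$ equal to a weighted sum of the pair. The relations you use throughout, $\sigma(m)-m=\alpha n$ and $\sigma(n)-n=\beta m$, are the Cohen--Gretton--Hagis relations recalled in the introduction under the different name ``$(\alpha,\beta)$-amicable pair''. Your entire argument hinges on having two independent equations: the congruence engine reduces $\sigma(m)=m+\alpha n$ and $\sigma(n)=n+\beta m$ modulo $p$ and eliminates $n$ to obtain $p\mid(1-\alpha\beta)m$. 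Under the paper's actual definition this collapses: since $\sigma(m)=\sigma(n)$ identically, the hypotheses $p\mid\sigma(m)$ and $p\mid\sigma(n)$ are one and the same condition $p\mid\alpha m+\beta n$, from which no elimination is possible; worse, by Lemma \ref{Erdös} the condition $p\mid\sigma(m)$ holds for \emph{almost all} $m$, so your classes (i) and (ii) coincide and are small while class (iii) would have to absorb nearly everything, and the decomposition yields nothing. The case split $\alpha\beta=1$ versus $\alpha\beta\neq1$ is likewise an artifact of the wrong definition.

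For contrast, the paper splits the pairs according to whether $\sigma(m)/m\geq K$ or $\sigma(m)/m<K$. In the first case \eqref{abundant} forces $\alpha+\beta<\sigma(m)/m$, so the number of partners $n$ that can correspond to a given $m$ is at most $\frac{1}{2}(\sigma(m)/m-1)(\sigma(m)/m-2)$, and the moment estimates of Lemma \ref{sigma1} \emph{and} Lemma \ref{sigmak} (your proposal only ever needs the first moment) bound this class by $O(x/K)$. In the second case $\alpha+\beta<K$, and Lemma \ref{Erdös} with modulus $K^4$, combined with counting $m$ subject to the congruence $\alpha m+\beta n\equiv 0 \ (\mathrm{mod}\ K^4)$ satisfied by the common value $\sigma(m)$, gives roughly $x/K+o(x)$; letting $K\to\infty$ finishes. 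Your writeup is, in substance, a correct reproof of the Cohen--Gretton--Hagis density theorem for \emph{their} multiamicable pairs, and its internal logic (the injection of partners, the truncation via Lemma \ref{sigma1}, the order of quantifiers on $K$ and $p$) is sound; but it does not prove the statement in question, and its central trick has no analogue once the constraint $\sigma(m)=\sigma(n)$ is imposed.
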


\begin{proof}
According to \eqref{sigma1k} we have   
\begin{equation}\label{sigmamn}
\sigma(m)=\sigma(n)=\alpha m+\beta n\,.
\end{equation}
Assume that $K$ is a large positive  integer.
Let $M_1(x)$ denote the number of $(\alpha, \beta)$-multiamicable pairs $m$, $n$ with $m\leq x$ and $\sigma(m)/m\geq K$.
From \eqref{abundant} we derive
\begin{equation}\label{sigmamab}
\frac{\sigma(m)}{m}>\alpha+\beta\,.
\end{equation}
Therefore the number of $n$'s that can correspond to a given value of $m$ is less than $\frac{1}{2}(\sigma(m)/m-1)(\sigma(m)/m-2)$.
Now Lemma \ref{sigma1} and Lemma \ref{sigmak} give us
\begin{align}\label{M1est}
M_1(x)&\leq\frac{1}{2}\sum\limits_{m\leq x\atop{\sigma(m)/m\geq K}}\bigg(\frac{\sigma(m)}{m}-1\bigg)\bigg(\frac{\sigma(m)}{m}-2\bigg)\nonumber\\
&\leq\frac{1}{2K}\sum\limits_{m\leq x}\frac{\sigma(m)}{m}\bigg(\frac{\sigma(m)}{m}-1\bigg)\bigg(\frac{\sigma(m)}{m}-2\bigg)\nonumber\\
&<\frac{1}{K}\sum\limits_{m\leq x}\frac{\sigma(m)}{m}+\frac{3}{2K}\sum\limits_{m\leq x}\frac{\sigma^2(m)}{m^2}+\frac{1}{2K}\sum\limits_{m\leq x}\frac{\sigma^3(m)}{m^3}\nonumber\\
&<\frac{1}{K}\zeta(2)x+\frac{3}{2K}\zeta^2(2)\zeta(3)x+\frac{1}{2K}\zeta^3(2)\zeta(5)x\,.
\end{align}
Let $M_2(x)$ denote the number of $(\alpha, \beta)$-multiamicable pairs $m$, $n$ with $m \leq x$ and $\sigma(m)/m<K$. By \eqref{sigmamab} we get
\begin{equation*}
\alpha+\beta<K\,.
\end{equation*}
Consequently the number of $n$'s that can correspond to a fixed value of $m$ is less than $\frac{1}{2}(K-1)(K-2)$. 
Using this consideration, \eqref{sigmamn} and Lemma \ref{Erdos} we obtain 
\begin{align*}
M_2(x)&\leq\frac{1}{2}(K-1)(K-2)\sum\limits_{m\leq x\atop{K^4\mid\sigma(m)}}1+\frac{1}{2}(K-1)(K-2)\sum\limits_{m\leq x\atop{K^4\nmid\sigma(m)}}1 \nonumber\\
&\leq\frac{1}{2}(K-1)(K-2)\sum\limits_{m\leq x\atop{\alpha m+\beta n\equiv 0\, ( \textmd{mod}\,K^4)}}1+o(x) \nonumber\\
\end{align*}

\begin{align}\label{M2est}
&\leq\frac{1}{2}(K-1)(K-2)\bigg(\frac{x}{K^3}+\mathcal{O}\big(K\big)\bigg)+o(x) \nonumber\\
&\leq\frac{x}{K}+o(x)\,.
\end{align}
Apparently 
\begin{equation}\label{M12}
M(x)=M_1(x)+M_2(x)\,.
\end{equation}
Summarizing \eqref{M1est}, \eqref{M2est}, \eqref{M12} and bearing in mind that $K$ can be taken arbitrarily large we establish  
\begin{equation*}
M(x)=o(x) \quad \mbox{ as } \quad x\rightarrow \infty.
\end{equation*}
This completes the proof of Theorem \ref{Mox}.
\end{proof}
The following conjecture provides a natural conclusion to this section.
\begin{conjecture}
For any fixed positive integers $\alpha_1,\ldots,\alpha_k$, there exist  infinitely many $(\alpha_1,\ldots,\alpha_k)$-multiamicable $k$-tuples. 
\end{conjecture}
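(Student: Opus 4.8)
Because specializing the defining relation \eqref{sigma1k} to $k=2$, $(\alpha_1,\alpha_2)=(1,1)$ gives $\sigma(n_1)=\sigma(n_2)=n_1+n_2$, which is exactly the defining relation of an amicable pair, the conjecture contains as a special instance the assertion that there are infinitely many amicable pairs. That is among the oldest open problems in number theory, so no unconditional proof can be expected at present; the most one can realistically aim for is a parametric construction yielding \emph{conditional} infinitude, backed by a heuristic count. (One should also bar the genuinely degenerate case $k=1$, $\alpha_1=1$, where $\sigma(n_1)=n_1$ forces $n_1=1$ and the family is finite, so the statement as written needs that case excluded.) My plan is to set up a family through the construction theorem of this section and to isolate precisely the prime-existence input on which infinitude would rest.

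First I would fix a seed $N_1<\cdots<N_k$ having a common divisor sum $\sigma(N_1)=\cdots=\sigma(N_k)=:S$ (this equality is forced, since the numerators $\alpha_1N_1+\cdots+\alpha_kN_k$ in the construction theorem are identical across $j$), and set
\begin{equation*}
r=\frac{\alpha_1N_1+\cdots+\alpha_kN_k}{S}\,.
\end{equation*}
Then, by that theorem, every integer $a$ with $(a,N_1)=\cdots=(a,N_k)=1$ and $\sigma(a)/a=r$ produces an $(\alpha_1,\ldots,\alpha_k)$ -- multiamicable $k$-tuple $aN_1,\ldots,aN_k$, and distinct admissible $a$ clearly give distinct tuples. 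The problem thus reduces to exhibiting infinitely many integers $a$ of prescribed abundancy index $\sigma(a)/a=r$ that are coprime to the finitely many primes dividing $N_1\cdots N_k$.

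To generate such $a$ I would imitate the classical Thabit and Euler rules: search for $a$ of the shape $a=b\,p$ with $b$ fixed and $p$ a prime coprime to $b$, so that multiplicativity of $\sigma$ turns $\sigma(a)/a=r$ into the single condition $\sigma(b)(p+1)=r\,b\,p$, forcing $p=\sigma(b)/(rb-\sigma(b))$ to be a fixed rational expression that must come out prime; letting $b$ range over a suitable family, or more flexibly taking $a=b\,p_1\cdots p_t$, one is led to the simultaneous primality of several fixed polynomial, or $c\cdot 2^{\,j}\pm1$, type expressions. Infinitude of the $k$-tuple family would then follow from the existence of infinitely many primes of the required special form.

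The hard part is exactly this last input. Guaranteeing infinitely many primes of the shape produced by a Thabit/Euler-type rule is a case of the Dickson, Bunyakovsky or Hardy--Littlewood prime-tuple conjectures, every instance of which is open; and even the intermediate question of whether a fixed rational $r>1$ is the abundancy index of infinitely many integers is unsolved in general (already $r=2$ is the infinitude-of-perfect-numbers problem). I would therefore expect the realistic outcome to be twofold: a conditional theorem asserting infinitely many $(\alpha_1,\ldots,\alpha_k)$ -- multiamicable $k$-tuples \emph{modulo} a stated prime-tuple conjecture, together with an Erd\"os--Pomerance-style heuristic that counts admissible $a\le x$ with $\sigma(a)/a=r$ and predicts a divergent count, thereby making the conjecture plausible while leaving a full unconditional proof out of reach for the very same reason it is for ordinary amicable pairs.
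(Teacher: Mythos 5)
The statement you were asked to prove is, in the paper, precisely a \emph{conjecture}: the author states it at the end of the multiamicable section and offers no proof whatsoever, only the table of examples produced by the construction theorem. Your assessment is therefore the correct one, and it matches the paper's implicit stance. Specializing \eqref{sigma1k} to $k=2$, $\alpha_1=\alpha_2=1$ does give $\sigma(n_1)=\sigma(n_2)=n_1+n_2$, so the conjecture subsumes the infinitude of amicable pairs, which has been open since antiquity; no unconditional proof can be expected, and you rightly decline to claim one. Your proposed machinery --- fix seeds $N_1<\cdots<N_k$ with equal $\sigma$-value (you are also right that this equality is forced by the identical numerators), then hunt for multipliers $a$ coprime to the $N_j$ with prescribed abundancy $\sigma(a)/a=r$ via Thabit/Euler-type rules --- is exactly how the paper generates its Table 1 examples, following Dickson and Mason, and your reduction of infinitude to open prime-existence conjectures (Dickson/Bunyakovsky/prime tuples) is the honest endpoint of that route. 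One genuinely useful observation you add that the paper misses: for $k=1$, $\alpha_1=1$ the condition $\sigma(n_1)=n_1$ has only the solution $n_1=1$, so the conjecture as literally stated is false in that degenerate case and needs it excluded (and for $k=1$, $\alpha_1=3$ it is even heuristically expected to fail, since triperfect numbers are believed to be finite in number); a referee would want the author to restrict the conjecture accordingly.
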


\section{{\boldmath$(\alpha, \beta)$}-amicable pairs}
\indent

Let $\alpha$ and $\beta$ be positive integers. We say that the numbers $m$ and $n$ form an $(\alpha, \beta)$-amicable pair if
\begin{equation*}
\sigma(\alpha n)-\alpha n=m \quad \mbox{ and }   \quad \sigma(\beta m)-\beta m=n
\end{equation*}
or equivalently 
\begin{equation*}
m=s(\alpha n) \quad \mbox{ and }   \quad n=s(\beta m)\,.
\end{equation*}
When $\alpha=\beta=1$ then $m$ and $n$ are amicable. 
Several $(\alpha, \beta)$-amicable pairs are listed in the table below.
\begin{center}
\begin{tabular}[t]{|p{0.5em}|p{0.5em}|p{12em}|}
\hline 
$\alpha$ & $\beta$ &\hspace{3mm} $(\alpha, \beta)$-amicable pairs \\
\hline
1 & 2 &  $(26, 46)$, $(296, 586)$ \\
\hline
1 & 3 &  $(3, 4)$, $(15, 33)$, $(5919, 7905)$ \\
\hline
\end{tabular}
\captionof{table}{}\label{Table2}
\end{center}
The sequence \seqnum{A384411} in the OEIS \cite{Sloane} consists of $(1, 2)$-amicable pairs.
The OEIS \cite{Sloane} sequence \seqnum{A383483} lists the smaller elements of $(1, 3)$-amicable pairs.

\section{{\boldmath$\mathrm{PM(p, q)}$}-amicable {\boldmath$k$}-tuples}
\indent

The abbreviation $\textmd{PM}$ stands for Power Mean. Let $k\geq1$, $p\geq1$  and $q\geq1$ be integers.
We say that the numbers $n_1,\ldots, n_k$ form a $\textmd{PM(p, q)}$--amicable $k$-tuple if
\begin{equation*}
\sigma^p(n_1)+\sigma^p(n_2)+\cdots+\sigma^p(n_k)=q(n_1+n_2+\cdots+n_k)^p\,.
\end{equation*}
Obviously, when $k=q$, then every amicable  $k$-tuple satisfies the above equation.
Several $\textmd{PM(p, q)}$-amicable $k$-tuples that are not amicable  $k$-tuples are listed in the table below.

\begin{center}
\begin{tabular}[t]{|p{0.5em}|p{0.5em}|p{0.5em}|p{22em}|}
\hline 
$k$ & $p$ & $q$ & \hspace{18mm} $\textmd{PM(p, q)}$-amicable $k$-tuples  \\
\hline
2 & 1 & 2 & $(3, 20)$, $(5, 12)$, $(5, 70)$, $(5, 88)$, $(6, 28)$, $(10, 20)$ \\
\hline
2 & 1 & 3 & $(6, 180)$, $(10, 780)$, $(24, 780)$, $(26, 660)$, $(34, 504)$ \\
\hline
2 & 2 & 1 & $(2, 3)$, $(19, 33)$, $(27, 77)$, $(39, 161)$, $(45, 133)$, $(51, 69)$ \\
\hline
2 & 2 & 2 & $(1, 4)$, $(1378, 9962)$,   $(1660, 4892)$, $(1975, 10425)$   \\
\hline
3 & 1 & 2 &  $(1, 2, 20)$, $(1, 3, 18)$, $(1, 4, 20)$,  $(1, 8, 20)$, $(1, 10, 18 )$\\
\hline
3 & 1 & 3 &  $(1, 76, 360)$, $(2, 11, 240)$, $(2, 41, 420)$,  $(6, 120, 180)$ \\
\hline
3 & 2 & 1 &  $(1, 47, 185)$, $(2, 11, 14)$, $(2, 110, 371)$, $(3, 302, 411)$  \\
\hline
3 & 2 & 2 &  $(3, 36, 98)$, $(5, 34, 135)$, $(5, 40, 105)$, $(10, 106, 406)$  \\
\hline
3 & 2 & 3 &  $(14, 350, 1340)$, $(22, 96, 1862)$, $(31, 301, 1876)$   \\
\hline
3 & 2 & 4 &  $(3, 39, 156)$, $(11, 40, 294)$, $(12, 14, 60)$, $(17, 70, 210)$  \\
\hline
3 & 2 & 5 & $(6, 222, 1608)$, $(15, 33, 168)$, $(30, 66, 552)$  \\
\hline
3 & 3 & 1 &  $(2, 10, 15)$, $(4, 20, 39)$, $(8, 40, 87)$, $(9, 45, 63)$ \\
\hline
3 & 3 & 3 & $(56, 134, 710)$, $(108, 268, 1724)$, $(236, 404, 2510)$  \\
\hline
\end{tabular}
\captionof{table}{}\label{Table3}
\end{center}
The sequence \seqnum{A383484} in the OEIS \cite{Sloane} consists of the larger components of $\textmd{PM(2, 1)}$-amicable pairs.
The OEIS \cite{Sloane} sequence \seqnum{A385008} lists the larger elements of $\textmd{PM(2, 2)}$-amicable pairs.

\section{{\boldmath$\mathrm{WPM(p)}$}-amicable {\boldmath$k$}-tuples}
\indent

The abbreviation $\textmd{WPM}$ stands for Weighted Power Mean. Let $k\geq1$ and $p\geq1$ be integers. 
We say that the numbers $n_1,\ldots, n_k$ form an $\textmd{WPM(p)}$-amicable $k$-tuple if
\begin{equation*}
n_1\sigma^p(n_1)+n_2\sigma^p(n_2)+\cdots+n_k\sigma^p(n_k)=(n_1+n_2+\cdots+n_k)^{p+1}\,.
\end{equation*}
Apparently, each amicable $k$-tuple satisfies the above equation.
Several $\textmd{WPM(p)}$-amicable $k$-tuples that are not amicable $k$-tuples are listed in the table below.

\begin{center}
\begin{tabular}[t]{|p{0.5em}|p{0.5em}|p{19em}|}
\hline 
$k$ & $p$ &\hspace{10mm} $\textmd{WPM(p)}$-amicable $k$-tuples  \\
\hline
2 & 1 &  $(4, 6)$, $(10, 16)$, $(34, 68)$, $(60, 81)$, $(91, 273)$  \\
\hline
2 & 2 &  $(7, 21)$, $(105, 231)$, $(1065, 2499)$ \\
\hline
3 & 1 &  $(1, 21, 63)$, $(1, 22, 44)$, $(2, 38, 98)$, $(4, 6, 34)$ \\
\hline
3 & 2 & $(12, 276, 412)$, $(70, 210, 224)$, $(87, 189, 264)$  \\
\hline
\end{tabular}
\captionof{table}{}\label{Table4}
\end{center}
The sequence \seqnum{A383714} in the OEIS \cite{Sloane} consists of the larger components of $\textmd{WPM(2)}$-amicable pairs.

\section{{\boldmath$\mathrm{GM}$}-amicable {\boldmath$k$}-tuples}
\indent

The abbreviation $\textmd{GM}$ stands for Geometric Mean. Let $k\geq1$ be integer. We say that the numbers $n_1,\ldots, n_k$ form a $\textmd{GM}$-amicable $k$-tuple if
\begin{equation*}
\sigma(n_1)\sigma(n_2)\cdots\sigma(n_k)=(n_1+n_2+\cdots+n_k)^k\,.
\end{equation*}
It is easy to see that every amicable  $k$-tuple satisfies the above equation.
Several $\textmd{GM}$-amicable $k$-tuples that are not amicable  $k$-tuples are listed in the table below.

\begin{center}
\begin{tabular}[t]{|p{0.5em}|p{27.2em}|}
\hline 
$k$ & \hspace{33mm}  $\textmd{GM}$-amicable $k$-tuples  \\
\hline
2 & $(28, 84)$, $(42, 102)$, $(60, 276)$, $(92, 160)$, $(244, 624)$, $(426, 582)$ \\
\hline
3 & $(1080, 1092, 1188)$, $(10164, 10584, 11172)$, $(10440, 10692, 11628)$ \\
\hline
\end{tabular}
\captionof{table}{}\label{Table5}
\end{center}
The OEIS \cite{Sloane} sequences \seqnum{A383932} and \seqnum{A386010} list the larger elements of $\textmd{GM}$-amicable pairs and $\textmd{GM}$-amicable triples, respectively.

\section{ {\boldmath$\mathrm{WGM}$}-amicable {\boldmath$k$}-tuples}
\indent

The abbreviation $\textmd{WGM}$ stands for Weighted Geometric Mean. Let $k\geq1$ be integer. We say that the numbers $n_1,\ldots, n_k$ form an $\textmd{WGM}$-amicable $k$-tuple if
\begin{equation*}
\sigma(n_1)^{n_1}\sigma(n_2)^{n_2}\cdots\sigma(n_k)^{n_k}=(n_1+n_2+\cdots+n_k)^{n_1+n_2+\cdots+n_k}\,.
\end{equation*}
It is clear that each amicable  $k$-tuple satisfies the above equation.
Although we cannot set an example, we cannot rule out the existence of $\textmd{WGM}$-amicable $k$-tuples that are not amicable $k$-tuples.
The sequence \seqnum{A385186} in the OEIS \cite{Sloane} consists of the larger components of $\textmd{WGM}$-amicable pairs.

\section{ {\boldmath$\mathrm{IWGM}$}-amicable {\boldmath$k$}-tuples}
\indent

The abbreviation $\textmd{IWGM}$ stands for Inverse Weighted Geometric Mean. The positive integers $n_1,\ldots, n_k$ form an $\textmd{IWGM}$-amicable $k$-tuple if
\begin{equation*}
n_1^{\sigma(n_1)}n_2^{\sigma(n_2)}\cdots n_k^{\sigma(n_k)}=(n_1n_2\cdots n_k)^{n_1+n_2+\cdots+n_k}\,.
\end{equation*}
Apparently, each amicable  $k$-tuple satisfies the above equation. Several $\textmd{IWGM}$-amicable pairs that are not amicable pairs are listed in the table below.
\begin{center}
\begin{tabular}[t]{|p{37em}|}
\hline 
\hspace{54mm} $\textmd{IWGM}$-amicable pairs \\
\hline
 $(1, 2)$, $(1, 3)$, $(1, 5)$, $(1, 7)$, $(1, 11)$, $(1, 13)$, $(1, 17)$, $(1, 19)$, $(1, 23)$, $(1, 29)$, $(1, 31)$, $(1, 37)$  \\     
\hline
\end{tabular}
\captionof{table}{}\label{Table6}
\end{center}
The OEIS \cite{Sloane} sequence \seqnum{A385492} lists the larger elements of $\textmd{IWGM}$-amicable pairs.

\section{{\boldmath$\mathrm{HM(p, q)}$}-amicable {\boldmath$k$}-tuples}
\indent

The abbreviation $\textrm{HM}$ stands for Harmonic Mean. Let $k\geq1$, $p\geq1$  and $q\geq1$ be integers. We say that the numbers $n_1,\ldots, n_k$ form a $\textmd{HM(p, q)}$-amicable $k$-tuple if
\begin{equation*}
\Bigg(\frac{1}{\sigma^p(n_1)}+\frac{1}{\sigma^p(n_2)}+\cdots+\frac{1}{\sigma^p(n_k)}\Bigg)(n_1+n_2+\cdots+n_k)^p=q\,.
\end{equation*}
Obviously, if $k=q$, then every amicable  $k$-tuple satisfies the above equation.
Several $\textrm{HM(p, q)}$-amicable $k$-tuples that are not amicable  $k$-tuples are listed in the table below.

\begin{center}
\begin{tabular}[t]{|p{0.5em}|p{0.5em}|p{0.5em}|p{24em}|}
\hline 
$k$ & $p$ & $q$ & \hspace{22mm} $\textmd{HM(p, q)}$-amicable $k$-tuples  \\
\hline
2 & 1 & 2 & $(20, 28)$, $(24, 56)$, $(30, 66)$, $(40, 90)$, $(56, 88)$, $(92, 132)$ \\
\hline
2 & 1 & 3 & $(3, 6)$, $(10, 32)$, $(12, 60)$, $(15, 33)$, $(24, 116)$, $(33, 57)$ \\
\hline
2 & 2 & 1 & $(120, 168)$, $(1272, 1320)$, $(2160, 3792)$, $(3672, 4968)$ \\
\hline
2 & 2 & 4 & $(435, 717)$, $(447, 513)$, $(2001, 2607)$, $(2001, 2607)$ \\
\hline
3 & 1 & 3 &  $(840, 1020, 1380)$, $(1008, 1260, 1638)$, $(2016, 2232, 2772)$\\
\hline
3 & 2 & 4 &  $(480, 480, 1056)$, $(1400, 1400, 2160)$, $(3936, 3936, 6240)$\\
\hline
\end{tabular}
\captionof{table}{}\label{Table7}
\end{center}
The sequence \seqnum{A384814} in the OEIS \cite{Sloane} consists of the larger components of $\textmd{HM(1, 2)}$-amicable pairs.
The OEIS \cite{Sloane} sequence \seqnum{A383964} lists the larger elements of $\textmd{HM(2, 1)}$-amicable pairs.
The sequence \seqnum{A385155} in the OEIS \cite{Sloane} consists of the larger members of $\textmd{HM(1, 3)}$-amicable triples.

\section{{\boldmath$\mathrm{WHM(p)}$}-amicable {\boldmath$k$}-tuples}\label{WHM}
\indent

The abbreviation $\textmd{WHM}$ stands for Weighted Harmonic Mean. 
Let $k\geq1$ and $p\geq1$ be integers. We say that the numbers $n_1,\ldots, n_k$ form an $\textmd{WHM(p)}$-amicable $k$-tuple if
\begin{equation*}
\Bigg(\frac{n^p_1}{\sigma^p(n_1)}+\frac{n^p_2}{\sigma^p(n_2)}+\cdots+\frac{n^p_k}{\sigma^p(n_k)}\Bigg)(n_1+n_2+\cdots+n_k)^p=n^p_1+n^p_2+\cdots+n^p_k\,.
\end{equation*}
Apparently, each amicable $k$-tuple satisfies the above equation.
It is easy to see that $\textmd{WHM(1)}$-amicable $k$-tuple coincides with feebly amicable $k$-tuple defined in \cite{Bozarth}.
Several $\textmd{WHM(p)}$-amicable $k$-tuples that are not amicable $k$-tuples are listed in the table below.

\begin{center}
\begin{tabular}[t]{|p{0.5em}|p{0.5em}|p{25em}|}
\hline 
$k$ & $p$ &\hspace{24mm} $\textmd{WHM(p)}$-amicable $k$-tuples \\
\hline
3 & 1 & $(72, 360, 504)$, $(84, 120, 840)$, $(84, 672, 840)$, $(96, 660, 840)$   \\    
\hline
3 & 2 & $(117, 117, 4680)$\\    
\hline
\end{tabular}
\captionof{table}{}\label{Table8}
\end{center}
The OEIS \cite{Sloane} sequences \seqnum{A384487} and \seqnum{A385749} list the larger elements of $\textmd{WHM(1)}$- and  $\textmd{WHM(2)}$-amicable triples, respectively.

\section{Cross-Harmonious pairs}
\indent

Let $a$ and $b$ be positive integers. We say that $a$ and $b$ form a cross-harmonious pair if 
\begin{equation*}
\frac{b}{\sigma(a)}+\frac{a}{\sigma(b)}=1\,.
\end{equation*}
Clearly, every amicable pair is cross-harmonious. Several cross-harmonious pairs that are not amicable pairs are listed in the table below.
\begin{center}
\begin{tabular}[t]{|p{35em}|}
\hline 
\hspace{52mm} Cross-harmonious pairs \\
\hline
 $(12, 14)$, $(12, 20)$, $(30, 54)$, $(32, 42)$, $(48, 62)$, $(70, 88)$, $(70, 108)$, $(78, 126)$, $(88, 114)$  \\     
\hline
\end{tabular}
\captionof{table}{}\label{Table9}
\end{center}
The sequence \seqnum{A384706} in the OEIS \cite{Sloane} consists of the larger components of cross-harmonious pairs.

\section{{\boldmath$\mathrm{MP(p, q)}$}-amicable {\boldmath$k$}-tuples}
\indent

The abbreviation $\textmd{MP}$ stands for Mean Power. Let $k\geq1$, $p\geq2$  and $q\geq1$ be integers. We say that the numbers $n_1,\ldots, n_k$ form a $\textmd{MP(p, q)}$-amicable $k$-tuple if
\begin{equation*}
\sigma^p(n_1)+\sigma^p(n_2)+\cdots+\sigma^p(n_k)=q(n^p_1+n^p_2+\cdots+n^p_k)\,.
\end{equation*}
Several $\textmd{MP(p, q)}$-amicable $k$-tuples are listed in the table below.

\begin{center}
\begin{tabular}[t]{|p{0.5em}|p{0.5em}|p{0.5em}|p{30.6em}|}
\hline 
$k$ & $p$ & $q$ & \hspace{34mm} $\textmd{MP(p, q)}$-amicable $k$-tuples  \\
\hline
2 & 2 & 2 & $(1, 2)$, $(13, 21)$, $(13, 27)$, $(17, 175)$, $(45, 123)$, $(1069, 2133)$, $(1093, 2187)$ \\
\hline
2 & 2 & 4 & $ (6, 28)$, $(12, 14)$, $(48, 62)$, $(112, 124)$, $(135, 208)$, $(160, 189)$, $(192, 254)$ \\
\hline
3 & 2 & 2 & $(2, 4, 51)$, $(3, 40, 71)$, $(5, 12, 23)$, $(7, 116, 303)$, $(11, 20, 55)$, $(12, 215, 333)$\\
\hline
3 & 2 & 3 & $(1, 81, 148)$, $(10, 94, 164)$, $(14, 20, 34)$, $(20, 82, 116)$, $(26, 70, 418)$ \\
\hline
4 & 3 & 3 & $(1, 4, 5, 9)$, $(32, 34, 202, 245)$, $(52, 108, 135, 233)$, $(55, 58, 230, 281)$ \\
\hline
4 & 3 & 4 & $(2, 49, 56, 118)$, $(7, 35, 51, 75)$, $ (10, 168, 207, 307)$, $(37, 74, 232, 253)$\\
\hline
4 & 3 & 5 & $(11, 25, 95, 148)$, $(15, 59, 128, 129)$, $(35, 170, 186, 237)$, $(44, 125, 139, 266)$ \\
\hline
4 & 3 & 6 & $(6, 80, 85, 135)$, $(20, 22, 34, 92)$, $(41, 66, 123, 190)$, $(41, 70, 107, 190)$ \\
\hline
4 & 3 & 7 & $(3, 181, 198, 212)$, $(11, 21, 24, 25)$, $(14, 30, 94, 102)$, $(23, 81, 112, 135)$ \\
\hline
4 & 3 & 8 & $(13, 91, 116, 176)$, $(30, 74, 82, 112)$, $(48, 62, 112, 124)$, $(48, 62, 160, 189)$ \\
\hline
4 & 3 & 9 & $(18, 55, 178, 220)$, $(28, 231, 273, 320)$, $(36, 43, 44, 78)$, $(44, 46, 96, 258)$ \\
\hline
\end{tabular}
\captionof{table}{}\label{Table10}
\end{center}
The OEIS \cite{Sloane} sequence \seqnum{A384255} lists the larger members of $\textmd{MP(2,2)}$-amicable pairs.
\begin{question}
Is there an $MP(2, 2)$-amicable pair $(m, n)$ such that $\sigma(m)=\sigma(n)$?
More precisely, we raise the question of the existence of a number pair $(m, n)$ such that
\begin{equation*}
\sigma^2(m)=\sigma^2(n)=m^2+n^2\,.
\end{equation*}
\end{question}

\section{Sigma-Quadratic triples}
\indent

The positive integers $a\leq b$ and $c$ form a sigma-quadratic triple if 
\begin{equation*}
\sigma^2(a)=\sigma^2(b)=a^2+b^2+c^2\,.
\end{equation*}
Several sigma-quadratic triples are listed in the table below.
\begin{center}
\begin{tabular}[t]{|p{37em}|}
\hline 
\hspace{52mm} Sigma-Quadratic triples \\
\hline
$(2, 2, 1)$, $(40, 40, 70)$, $(40, 58, 56)$, $(164, 194, 148)$, $(196, 196, 287)$, $(224, 224, 392)$, $(1120, 1120, 2576)$, 
$(3040, 4024, 5632)$, $(44200, 49300, 96680)$, $(687184, 703312, 965780)$ \\     
\hline
\end{tabular}
\captionof{table}{}\label{Table11}
\end{center}
The sequence \seqnum{A385356} in the OEIS \cite{Sloane} consists of the first components of sigma-quadratic triples.

\section{Sigma-Quadratic quadruples}
\indent

The positive integers $a\leq b\leq c$ and $d$ form a sigma-quadratic quadruple if 
\begin{equation*}
\sigma^2(a)=\sigma^2(b)=\sigma^2(c)=a^2+b^2+c^2+d^2\,.
\end{equation*}
Several sigma-quadratic quadruples are listed in the table below.
\begin{center}
\begin{tabular}[t]{|p{35em}|}
\hline 
\hspace{50mm} Sigma-Quadratic quadruples \\
\hline
$(4, 4, 4, 1)$, $(204, 220, 220, 340)$, $(3480, 3672, 4296, 8520)$, $(4796, 4988, 5276, 3108)$, $(5532, 6812, 6812, 6628)$, $(7152, 9272, 9584, 10816)$, $(33468, 35380, 43676, 42700)$\\     
\hline
\end{tabular}
\captionof{table}{}\label{Table12}
\end{center}

The OEIS \cite{Sloane} sequence \seqnum{A385531} lists the first elements of sigma-quadratic quadruples.

\section{Sigma-Cubic triples}
\indent

The positive integers $a$, $b$, and $c$ form a sigma-cubic triple if 
\begin{equation*}
\sigma^3(a)=a^3+b^3+c^3\,.
\end{equation*}
Several sigma-cubic triples are listed in the table below.
\begin{center}
\begin{tabular}[t]{|p{32em}|}
\hline 
\hspace{50mm} Sigma-Cubic triples \\
\hline
$(5, 3, 4)$, $(6, 8, 10)$, $(53, 12, 19)$, $(58, 59, 69)$, $(102, 26, 208)$, $(102, 117, 195)$, $(118, 116, 138)$, $(152, 172, 264)$, $(168, 336, 408)$, $(197, 27, 46)$, $(214, 173, 267)$ \\     
\hline
\end{tabular}
\captionof{table}{}\label{Table13}
\end{center}
The sequence \seqnum{A385325} in the OEIS \cite{Sloane} consists of the first members of sigma-cubic triples.

\section{Sigma-Cubic quadruples}
\indent

The positive integers $a\leq b$, $c$ and $d$ form a sigma-cubic quadruple if 
\begin{equation*}
\sigma^3(a)=\sigma^3(b)=a^3+b^3+c^3+d^3\,.
\end{equation*}
Several sigma-cubic quadruples are listed in the table below.
\begin{center}
\begin{tabular}[t]{|p{27em}|}
\hline 
\hspace{35mm} Sigma-Cubic quadruples \\
\hline
$(153, 153, 105, 165)$, $(216, 216, 168, 576)$, $(255, 321, 84, 312)$, $(324, 324, 271, 804)$, $(672, 910, 147, 1925)$, $(735, 1243, 215, 615)$ \\     
\hline
\end{tabular}
\captionof{table}{}\label{Table14}
\end{center}
The OEIS \cite{Sloane} sequence \seqnum{A385397} lists the first  components of sigma-cubic quadruples.

\section{Sigma-Cubic quintuples}
\indent

The positive integers $a\leq b\leq c$ and $d\leq e$ form a sigma-cubic quintuple if 
\begin{equation*}
\sigma^3(a)=\sigma^3(b)=\sigma^3(c)=a^3+b^3+c^3+d^3+e^3\,.
\end{equation*}
Several sigma-cubic quintuples are listed in the table below.
\begin{center}
\begin{tabular}[t]{|p{35em}|}
\hline 
\hspace{50mm} Sigma-Cubic quintuples \\
\hline
$(30, 55, 55, 11, 23)$, $(62, 62, 69, 4, 43)$, $(90, 153, 153, 135, 135)$, $(174, 190, 323, 5, 94)$, $(238, 321, 321, 77, 81)$, $(357, 385, 385, 233, 266)$, $(390, 476, 598, 470, 814)$\\     
\hline
\end{tabular}
\captionof{table}{}\label{Table15}
\end{center}
The OEIS \cite{Sloane} sequence \seqnum{A386378} consists of the first elements of sigma-cubic quintuples.

\section{\boldmath$\mathrm{P(i, j)}$-amicable pairs}
\indent

The abbreviation $\textmd{P}$ stands for Prime. We say that the prime numbers $p$ and $q$, where $p<q$, form a $\textmd{P(i, j)}$-amicable pair if
\begin{equation*}
\sigma(p+i)=\sigma(q+j)=p+q\,.
\end{equation*}
It is easy to see that when $i+j=0$ and $(p, q)$ as a $\textmd{P(i, j)}$-amicable pair then $(p+i, q+j)$ is an amicable pair. 
Several  $\textmd{P(i, j)}$-amicable pairs are listed in the table below.
\begin{center}
\begin{tabular}[t]{|p{0.9em}|p{0.9em}|p{34.5em}|}
\hline 
$i$ & $j$ &\hspace{48mm} $\textmd{P(i, j)}$-amicable pairs \\
\hline
1 & 1 &  $(23, 37)$, $(34673, 34687)$, $(55373, 65587)$, $(2056961, 2089951)$, $(5174363, 8161477)$  \\    
\hline
-1 & 1 & $(5021, 5563)$, $(185369, 203431)$, $(308621, 389923)$, $(879713, 901423)$\\    
\hline
-1 & -1 & $(853, 1163)$, $(4513, 7583)$, $(9109, 17099)$, $(44917, 65963)$, $(46183, 48857)$\\    
\hline
1 & 2 & $(179, 367)$, $(263, 457)$, $(557, 691)$, $(332273, 341647)$, $(401309, 909091)$\\    
\hline
1 & 3 & $(19, 23)$, $(29, 43)$, $(137, 151)$, $(223, 281)$, $(727, 953)$, $(22651, 22709)$ \\    
\hline
3 & 1 & $(11, 13)$, $(41, 43)$, $(107, 109)$, $(149, 151)$, $(257, 331)$, $(881, 883)$, $(2141, 2143)$ \\    
\hline
3 & 3 & $(2377, 3671)$, $(7069, 8807)$, $(7949, 9907)$, $(44701, 52067)$, $(71761, 85487)$ \\    
\hline
\end{tabular}
\captionof{table}{}\label{Table16}
\end{center}
The sequences \seqnum{A385586}, \seqnum{A385718}, \seqnum{A385739} and \seqnum{A385740} in the OEIS \cite{Sloane}  list the larger elements of 
$\textmd{P(1, 1)}$-, $\textmd{P(1, 2)}$-, $\textmd{P(-1, 1)}$- and $\textmd{P(-1, -1)}$-amicable pairs, respectively.

\section{Sigma-Quartic quintuples}
\indent

The positive integers $a\leq b$ and $c\leq d\leq e$ form a sigma-quartic quintuple if 
\begin{equation*}
\sigma^4(a)=\sigma^4(b)=a^4+b^4+c^4+d^4+e^4\,.
\end{equation*}
Several sigma-quartic quintuples are listed in the table below.
\begin{center}
\begin{tabular}[t]{|p{30.5em}|}
\hline 
\hspace{40mm} Sigma-Quartic quintuples \\
\hline
$(24, 24, 36, 48, 48)$, $(240, 240, 240, 408, 720)$, $(600, 600, 600, 1020, 1800)$, $(91963648, 91963648, 137945472, 183927296, 183927296)$\\     
\hline
\end{tabular}
\captionof{table}{}\label{Table17}
\end{center}
The OEIS \cite{Sloane} sequence \seqnum{A386225} consists of the first elements of sigma-quartic quintuples.
\begin{remark}
This section does not appear in the published version of the paper in Journal of Integer Sequences. It is included here in the arXiv version for completeness.
\end{remark}

\section{Sigma-Quintic quintuples}
\indent

The positive integers $a$, $b$, $c$, $d$ and $e$ form a sigma-quintic quintuple if 
\begin{equation*}
\sigma^5(a)=a^5+b^5+c^5+d^5+e^5\,.
\end{equation*}
Several sigma-quintic quintuples are listed in the table below.
\begin{center}
\begin{tabular}[t]{|p{29em}|}
\hline 
\hspace{40mm} Sigma-Quintic quintuples \\
\hline
$(46, 19, 43, 47, 67)$, $(94, 38, 86, 92, 134)$, $(946, 418, 1012, 1034, 1474)$, $(1139, 323, 731, 782, 799)$, $(63840, 144480, 154560, 157920, 225120)$ \\     
\hline
\end{tabular}
\captionof{table}{}\label{Table18}
\end{center}
The OEIS \cite{Sloane} sequence \seqnum{A386672} lists the first elements of sigma-quintic quintuples.
\begin{remark}
This section does not appear in the published version of the paper in Journal of Integer Sequences. It is included here in the arXiv version for completeness.
\end{remark}

\vskip20pt
\footnotesize
\begin{flushleft}
S. I. Dimitrov\\
\quad\\
Faculty of Applied Mathematics and Informatics\\
Technical University of Sofia \\
Blvd. St.Kliment Ohridski 8 \\
Sofia 1756, Bulgaria\\
e-mail: sdimitrov@tu-sofia.bg\\
\end{flushleft}

\begin{flushleft}
Department of Bioinformatics and Mathematical Modelling\\
Institute of Biophysics and Biomedical Engineering\\
Bulgarian Academy of Sciences\\
Acad. G. Bonchev Str. Bl. 105, Sofia 1113, Bulgaria \\
e-mail: xyzstoyan@gmail.com\\
\end{flushleft}

\end{document}